\newtheorem{theorem}{Theorem}[section]
\newtheorem{lemma}[theorem]{Lemma}
\newtheorem{proposition}[theorem]{Proposition}
\theoremstyle{definition}
\newtheorem{algorithm}{Algorithm}
\theoremstyle{remark}
\newtheorem*{example}{Example}
\newcommand{\precdot}{\prec\mathrel{\mkern-5mu}\mathrel{\cdot}}
\begin{document}

\thispagestyle{empty}

\begin{center}
{\color{MidnightBlue} \sffamily \Huge A Generalization of a Theorem of Mandel}
\end{center}

\vspace*{20pt}

\noindent \textbf{\Large Hery Randriamaro} 

\vspace*{10pt}

\noindent \textsc{Institut für Mathematik, Universität Kassel,\\ Heinrich-Plett-Straße 40,\\ 34132 Kassel, Germany} \\ \texttt{hery.randriamaro@mathematik.uni-kassel.de} \\ The author was supported by the Alexander von Humboldt Foundation

\vspace*{20pt}

\noindent \textsc{\large Abstract} 

\noindent A theorem of Mandel allows to determine the covector set of an oriented matroid from its set of topes by using the composition condition. We provide a generalization of that result, stating that the covector set of a conditional oriented matroid can also be determined by its set of topes, but by using the face symmetry condition. It permits to represent geometrical configurations in terms of conditional oriented matroids, more suitable for computer calculations. We treat apartments of hyperplane arrangements as example.

\vspace*{10pt}

\noindent \textsc{Keywords}: Conditional Oriented Matroid, Tope, Hyperplane Arrangement

\vspace*{10pt}

\noindent \textsc{MSC Number}: 52C40, 68R05

\vspace*{10pt}

\noindent {\color{MidnightBlue} \rule{\linewidth}{2pt}}

\vspace*{15pt}

\noindent Recently, \cite{bandelt2018coms} introduced the notion of conditional oriented matroids, or complexes of oriented matroids, which are common generalizations of oriented matroids and lopsided sets. As observed by \cite{richter20176}, oriented matroids are abstractions for several mathematical objects including directed graphs and central hyperplane arrangements, while \cite{bandelt2006combinatorics} pointed out that lopsided sets can be regarded as common generalizations of antimatroids and median graphs. We provide a generalization of a theorem of Mandel in Section~\ref{SeCOM} by proving that a conditional oriented matroid can completely determined from knowledge of its tope and by means of the face symmetry condition. \cite{knauer2020tope}, independently without the author having prior knowledge, gave another version of that generalization in their Theorem~4.9 by using tope graphs. In Section~\ref{SeApp}, we propose an algorithm to convert apartments of hyperplane arrangements to conditional oriented matroids. It gives the possibility to do computations, like $f$-polynomial computing, on these geometrical configurations.

\section{Conditional Oriented Matroids} \label{SeCOM}

\noindent This section describes conditional oriented matroids, recalls oriented matroids and the theorem of Mandel, then establishes our generalization of that theorem.

\smallskip

\noindent A \emph{sign system} is a pair $(E,\,\mathcal{L})$ containing a finite set $E$ and a subset $\mathcal{L}$ of $\{-1,\,0,\,1\}^E$. For $X,Y \in \mathcal{L}$, the \emph{composition} of $X$ and $Y$ is the element $X \circ Y$ of $\{-1,\,0,\,1\}^E$ defined, for every $e \in E$, by
$$(X \circ Y)_e := \begin{cases} X_e & \text{if}\ X_e \neq 0, \\ Y_e & \text{otherwise}, \end{cases}$$
and the \emph{separation set} of $X$ and $Y$ is $\mathrm{S}(X,\,Y) := \big\{e \in E\ \big|\ X_e = -Y_e \neq 0\big\}$. A \emph{conditional oriented matroid} is a sign system $(E,\,\mathcal{L})$ such that $\mathcal{L}$ satisfies the following conditions:
\vspace*{7pt}
\begin{itemize}
\item[(\textbf{FS})] if $X,Y \in \mathcal{L}$, then $X \circ -Y \in \mathcal{L}$,
\item[(\textbf{SE})] for each pair $X,Y \in \mathcal{L}$, and every $e \in \mathrm{S}(X,\,Y)$, there exists $Z \in \mathcal{L}$ such that
$$Z_e = 0 \quad \text{and} \quad \forall f \in E \setminus \mathrm{S}(X,\,Y),\ Z_f = (X \circ Y)_f = (Y \circ X)_f.$$
\end{itemize}
(FS) stands for face symmetry, and (SE) for strong elimination condition.

\noindent The elements of $\mathcal{L}$ are called \emph{covectors}. A partial order $\preceq$ is defined on $\mathcal{L}$ by $$\forall X, Y \in \mathcal{L}:\ X \preceq Y \ \Longleftrightarrow \ \forall e \in E,\, X_e \in \{0,\, Y_e\}.$$ Write $X \prec Y$ if $X \preceq Y$ and $X \neq Y$. One says that $Y$ \emph{covers} $X$, denoted $X \precdot Y$, if $X \prec Y$ and no $Z \in \mathcal{L}$ satisfies $X \prec Z \prec Y$. One calls $Y$ a \emph{tope} if no $Z \in \mathcal{L}$ covers $Y$. 

\smallskip

\noindent An \emph{oriented matroid} is a sign system $(E,\,\mathcal{L})$ such that $\mathcal{L}$ satisfies the following conditions:
\begin{itemize}
	\item[(\textbf{C})] if $X,Y \in \mathcal{L}$, then $X \circ Y \in \mathcal{L}$,
	\item[(\textbf{Sym})] if $X \in \mathcal{L}$, then $-X \in \mathcal{L}$,
	\item[(\textbf{SE})] for each pair $X,Y \in \mathcal{L}$, and every $e \in \mathrm{S}(X,\,Y)$, there exists $Z \in \mathcal{L}$ such that
	$$Z_e = 0 \quad \text{and} \quad \forall f \in E \setminus \mathrm{S}(X,\,Y),\ Z_f = (X \circ Y)_f = (Y \circ X)_f.$$
\end{itemize}
(C) stands for composition, and (Sym) for symmetry condition. For the sake of understanding, we provide a proof to the following known property.

\begin{proposition} \label{PrOM}
An oriented matroid is a conditional oriented matroid $(E,\,\mathcal{L})$ that satisfies the zero vector condition
\begin{itemize}
		\item[(\textbf{Z})]	the zero element $(0,\, \dots,\, 0)$ belongs to $\mathcal{L}$.
	\end{itemize}
\end{proposition}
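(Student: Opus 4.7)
The plan is to prove the equivalence in both directions. The sign systems in the definition are assumed nonempty throughout, as is standard. I would state both implications separately and handle them with short, axiom-checking arguments.

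For the direction \emph{COM + (Z) $\Rightarrow$ OM}: I would first derive (Sym) from (FS) and (Z). Since the zero vector $0$ lies in $\mathcal{L}$, applying (FS) to $0$ and any $Y \in \mathcal{L}$ gives $0 \circ (-Y) = -Y \in \mathcal{L}$. This establishes (Sym). Composition (C) then follows immediately: given $X, Y \in \mathcal{L}$, we have $-Y \in \mathcal{L}$ by what was just proved, and (FS) applied to the pair $X, -Y$ yields $X \circ -(-Y) = X \circ Y \in \mathcal{L}$. The strong elimination axiom (SE) is common to both definitions, so nothing further is needed.

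For the direction \emph{OM $\Rightarrow$ COM + (Z)}: The face symmetry axiom (FS) is immediate, since (Sym) gives $-Y \in \mathcal{L}$ and then (C) applied to $X$ and $-Y$ produces $X \circ (-Y) \in \mathcal{L}$; (SE) is already in the hypothesis. The substantive step is to derive (Z). My plan is an induction on the size of the support. Pick $X \in \mathcal{L}$; if $X = 0$ we are done, so suppose $X$ has minimum nonzero support among all covectors. By (Sym), $-X \in \mathcal{L}$. A direct computation gives $\mathrm{S}(X,-X) = \mathrm{supp}(X)$ and $(X \circ -X)_f = X_f$ for every $f$. Choosing any $e \in \mathrm{supp}(X)$ and invoking (SE) produces $Z \in \mathcal{L}$ with $Z_e = 0$ and $Z_f = X_f = 0$ for every $f \notin \mathrm{supp}(X)$. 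Hence $\mathrm{supp}(Z) \subsetneq \mathrm{supp}(X)$, contradicting minimality and forcing $X = 0$, i.e.\ $0 \in \mathcal{L}$.

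The main obstacle is the (Z) derivation. One must verify carefully that $(X \circ -X)$ coincides with $X$ and that $\mathrm{S}(X,-X) = \mathrm{supp}(X)$, so that the bookkeeping in (SE) really does strictly shrink the support; everything else in the proof amounts to plugging covectors into the axioms and reading off the conclusion.
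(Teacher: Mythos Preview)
Your proof is correct and, for the direction COM + (Z) $\Rightarrow$ OM and for the derivation of (FS) from (C) and (Sym), it is word-for-word the paper's argument. The one place you diverge is in justifying (Z) for an oriented matroid: the paper simply asserts this is ``obvious'' and moves on, whereas you supply an actual argument using (Sym) and (SE) on a covector of minimum support. Your argument is sound---and your nonemptiness caveat is warranted, since the empty sign system vacuously satisfies (C), (Sym), (SE) but not (Z). The paper is presumably leaning either on the convention that the covector axioms for oriented matroids already include $0 \in \mathcal{L}$ (as in the reference it cites), or on the minimum-support trick being standard; in any case your version is more self-contained.
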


\begin{proof}
Every oriented matroid is a conditional oriented matroid since both satisfy (SE), and one gets (FS) by combining (C) with (Sym). Moreover, it is obvious that every oriented matroid satisfies (Z). 
	
\smallskip
	
\noindent Now, suppose that $(E,\,\mathcal{L})$ is a conditional oriented matroid satisfying (Z):
\begin{itemize}
\item For every $X \in \mathcal{L}$, $(0,\, \dots,\, 0) \circ -X = -X \in \mathcal{L}$, so we get (Sym).
\item If $X,Y \in \mathcal{L}$, then $-Y \in \mathcal{L}$, hence $X \circ -(-Y) = X \circ Y \in \mathcal{L}$, so we get (C).
\end{itemize}
\end{proof}

\noindent We recall the Theorem of Mandel as stated in Theorem~4.2.13 of \cite{bjorner1999oriented}. One can look at Theorem~1.1 of \cite{cordovil1985combinatorial} for a version using non-Radon partitions.

\begin{theorem}
Let $(E,\,\mathcal{L})$ be an oriented matroid. Its set of topes $\mathcal{T}$ determines $\mathcal{L}$ via $$\mathcal{L} = \big\{X \in \{-1,\,0,\,1\}^E\ \big|\ \forall T \in \mathcal{T},\, X \circ T \in \mathcal{T}\big\}.$$
\end{theorem}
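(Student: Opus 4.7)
Denote $\mathcal{L}' := \bigl\{X \in \{-1,0,1\}^E \bigm| \forall T \in \mathcal{T},\, X \circ T \in \mathcal{T}\bigr\}$. I would prove the two inclusions $\mathcal{L} \subseteq \mathcal{L}'$ and $\mathcal{L}' \subseteq \mathcal{L}$ separately.

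For $\mathcal{L} \subseteq \mathcal{L}'$, I would first recall the standard fact that every tope of an oriented matroid has full support on non-loop coordinates; this is a short consequence of (Sym) and (SE), since any covector with a zero entry at a non-loop element admits a strict extension, contradicting maximality. Given $X \in \mathcal{L}$ and $T \in \mathcal{T}$, axiom (C) gives $X \circ T \in \mathcal{L}$; and $(X \circ T)_e = 0$ would force $X_e = T_e = 0$, which does not occur since $T$ has full support. Hence $X \circ T$ is a maximal covector, i.e.\ a tope.

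For the reverse inclusion I would induct on $z(X) := |\{e \in E : X_e = 0\}|$. The base case $z(X) = 0$ is immediate, since $X = X \circ T \in \mathcal{T} \subseteq \mathcal{L}$ for any tope $T$. For the inductive step, fix an arbitrary tope $T$ and set $T^{\pm} := X \circ (\pm T)$; both are topes by the hypothesis $X \in \mathcal{L}'$ and (Sym), and a direct computation gives $S(T^+, T^-) = z(X)$ with $T^{\pm}_f = X_f$ for every $f \notin z(X)$. Applying (SE) to $T^+$ and $T^-$ at a chosen $e \in z(X)$ produces a covector $Z \in \mathcal{L}$ with $Z_e = 0$ and $Z_f = X_f$ on the whole support of $X$.

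The main obstacle is that a single application of (SE) pins down only one coordinate of $z(X)$: the output $Z$ is unconstrained on $z(X) \setminus \{e\}$ and need not equal $X$. I would finish by iterating the elimination — each intermediate $Z$ lies in $\mathcal{L} \subseteq \mathcal{L}'$ with $X \preceq Z$, so it can be paired with another (SE) output or with a reflected tope provided by (Sym) to cancel a further coordinate — or, equivalently, identify $X$ with the coordinatewise meet of the upper cone $\{T \in \mathcal{T} : X \preceq T\}$, which is nonempty and which contains topes of both signs at every $e \in z(X)$ (obtained by applying the hypothesis to both $T$ and $-T$); the standard structural fact that such meets of topes lie in $\mathcal{L}$ then closes the argument. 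Making this simultaneous elimination of all of $z(X)$ rigorous, turning many one-coordinate uses of (SE) into a single witness $Z = X$, is the technical crux of the proof.
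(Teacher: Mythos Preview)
The paper does not prove this statement; it is quoted from the literature (Theorem~4.2.13 of Bj\"orner et al.) without argument. The only proof in the paper that bears on it is that of the generalization Theorem~\ref{ThMandel}, which for oriented matroids is equivalent to Mandel's theorem via (Sym): replacing $T$ by $-T$ in the quantifier turns the condition $X \circ -T \in \mathcal{T}$ into $X \circ T \in \mathcal{T}$. So the natural comparison is between your plan and that proof.

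Your forward inclusion $\mathcal{L} \subseteq \mathcal{L}'$ is correct and matches the paper's. The difference is in the reverse inclusion. The paper does not induct on the number of zeros of $X$; it inducts on $\mathrm{rk}\,\mathcal{L}$ and $|E|$, passing to a minor. Concretely: pick $e \in X^0$, use one application of (SE) exactly as you describe to show that $X \circ -Y \in \mathcal{L}$ for every covector $Y$ with $Y^0 = \{e\}$, deduce that $X \setminus \{e\}$ satisfies the tope hypothesis in the contraction $(E \setminus \{e\},\,\mathcal{L}/\{e\})$, apply the inductive hypothesis there, and lift back to $X \in \mathcal{L}$ using $X_e = 0$.

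The gap you flag in your own argument is real and is precisely what the minor induction circumvents. Your iteration produces covectors $Z \in \mathcal{L}$ with $X \preceq Z$, but nothing in the scheme forces $Z$ down to $X$: pairing $Z$ with reflected topes or with further (SE) outputs only manufactures more covectors above $X$, never $X$ itself. The alternative you propose --- taking the meet of $\{T \in \mathcal{T} : X \preceq T\}$ and invoking a ``standard structural fact'' that such meets lie in $\mathcal{L}$ --- is circular: that meet equals $X$, and its membership in $\mathcal{L}$ is exactly the claim to be proved. The paper's device sidesteps the difficulty by shrinking the ground set rather than trying to zero out all of $z(X)$ inside a fixed $E$.
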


\noindent Coming back to conditional oriented matroids, the \emph{rank} of a covector $X$ is $0$ if it covers no elements in $\mathcal{L}$, otherwise it is
$$\mathrm{rk}\,X := \max \{l \in \mathbb{N}\ |\ \exists X^1, X^2, \dots, X^l \in \mathcal{L},\, X^1 \precdot X^2 \precdot \dots \precdot X^l \precdot X\}.$$
The rank of $\mathcal{L}$ is $$\mathrm{rk}\,\mathcal{L} := \max \{\mathrm{rk}\,X \ |\ X \in \mathcal{L}\}.$$
The \emph{support} of $X$ is $\underline{X} := \{e \in E\ |\ X_e \neq 0\}$. And for $A \subseteq E$, the \emph{restriction} of $X$ to $E \setminus A$ is the element $X \setminus A \in \{-1,\,0,\,1\}^{E \setminus A}$ such that $(X \setminus A)_e = X_e$ for all $e \in E \setminus A$.

\begin{lemma}\citep[Lem.~1]{bandelt2018coms}
Let $(E,\,\mathcal{L})$ be a conditional oriented matroid, and $A \subseteq E$.
\begin{itemize}
\item The \emph{deletion} $(E \setminus A,\, \mathcal{L} \setminus A)$ of $A$, with $\mathcal{L} \setminus A = \{X \setminus A\ |\ X \in \mathcal{L}\}$,
\item and the \emph{contraction} $(E \setminus A,\, \mathcal{L}/A)$ of $A$, with $\mathcal{L}/A = \{X \setminus A\ |\ X \in \mathcal{L},\, \underline{X} \cap A = \varnothing\}$,
\end{itemize}
are conditional oriented matroids.
\end{lemma}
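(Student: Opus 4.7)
The plan is to verify that both $\mathcal{L}\setminus A$ and $\mathcal{L}/A$ satisfy (FS) and (SE). Throughout I will use the routine fact that the restriction operation commutes with composition and negation: for $X,Y\in\mathcal{L}$, one has $(X\circ -Y)\setminus A=(X\setminus A)\circ -(Y\setminus A)$; and the identity
$$\mathrm{S}(X\setminus A,\,Y\setminus A)=\mathrm{S}(X,\,Y)\setminus A$$
which follows directly from the definition of the separation set.

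For the deletion, (FS) is immediate: given $X',Y'\in\mathcal{L}\setminus A$, lift them to $X,Y\in\mathcal{L}$, apply (FS) in $\mathcal{L}$ to obtain $X\circ -Y\in\mathcal{L}$, and restrict. For (SE), I would fix $X',Y'\in\mathcal{L}\setminus A$ with lifts $X,Y\in\mathcal{L}$ and $e\in\mathrm{S}(X',Y')\subseteq\mathrm{S}(X,Y)$, apply (SE) in $\mathcal{L}$ to produce $Z\in\mathcal{L}$, and then show $Z\setminus A$ is the required witness. The nontrivial point is that an $f\in(E\setminus A)\setminus\mathrm{S}(X',Y')$ need not lie in $E\setminus\mathrm{S}(X,Y)$ at first glance, but by the separation identity it does, so the compatibility condition $Z_f=(X\circ Y)_f=(Y\circ X)_f$ transports to $(X'\circ Y')_f$ on the nose.

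For the contraction, the main observation is that if $X,Y\in\mathcal{L}$ satisfy $\underline{X}\cap A=\underline{Y}\cap A=\varnothing$, then $X_e=Y_e=0$ for every $e\in A$, so $A\cap\mathrm{S}(X,Y)=\varnothing$ and $\underline{X\circ -Y}\cap A=\varnothing$ as well. Consequently (FS) is inherited: $X\circ -Y$ already lies in the family whose restrictions form $\mathcal{L}/A$. For (SE), I would again lift $X',Y'\in\mathcal{L}/A$ to $X,Y\in\mathcal{L}$ with $\underline{X}\cap A=\underline{Y}\cap A=\varnothing$, apply (SE) in $\mathcal{L}$ to get $Z$, and then verify that $\underline{Z}\cap A=\varnothing$, which is the crucial extra requirement for $Z\setminus A$ to belong to $\mathcal{L}/A$. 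This is where the support disjointness pays off: any $f\in A$ lies in $E\setminus\mathrm{S}(X,Y)$, so by the elimination conclusion $Z_f=(X\circ Y)_f=0$.

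The step I expect to be the main (albeit mild) obstacle is the contraction (SE): one has to remember that the contraction is defined not just by restriction but by restricting only the covectors whose support avoids $A$, and therefore to show that the elimination witness $Z$ supplied by the ambient conditional oriented matroid already has the right support. Everything else is a bookkeeping check that the structural operations $\circ$, $-$, and $\mathrm{S}(\cdot,\cdot)$ interact correctly with the restriction $X\mapsto X\setminus A$.
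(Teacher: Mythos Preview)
The paper does not give its own proof of this lemma: it is quoted verbatim from \cite{bandelt2018coms} (their Lemma~1) and used without argument. Your proposal therefore cannot be compared against a proof in the present paper. That said, your verification is correct and is exactly the standard one: you use that restriction commutes with $\circ$ and $-$, that $\mathrm{S}(X\setminus A,\,Y\setminus A)=\mathrm{S}(X,Y)\setminus A$, and---for the contraction case---that the lifts $X,Y$ vanish on $A$, so any $f\in A$ lies outside $\mathrm{S}(X,Y)$ and the (SE) witness $Z$ satisfies $Z_f=(X\circ Y)_f=0$, forcing $\underline{Z}\cap A=\varnothing$. There is no gap.
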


\begin{lemma}
Let $(E,\,\mathcal{L})$ be a conditional oriented matroid, and take two topes $T^1, T^2 \in \mathcal{L}$. Then, $\underline{T^1} = \underline{T^2}$.
\end{lemma}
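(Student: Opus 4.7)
The plan is to show that every tope is a maximal element of $(\mathcal{L},\preceq)$, and then use the face symmetry axiom (FS) to prove that any covector's support is contained in the support of any tope. Equality of supports of two topes then follows by applying this statement twice.

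First, I would observe that since $E$ is finite, $\mathcal{L}$ is finite, so a tope is the same thing as a maximal element of $(\mathcal{L},\preceq)$: indeed, if $T$ were not maximal, one could pick $Y\succ T$ with $Y$ of minimal rank among strict upper bounds of $T$, and then $T\precdot Y$, contradicting $T$ being a tope. Conversely, a maximal element is clearly covered by no other covector.

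Next comes the key step. Fix a tope $T$ and any covector $X\in\mathcal{L}$. By (FS), $T\circ -X\in\mathcal{L}$. I would check directly from the definitions that $T\preceq T\circ -X$: for every $f\in E$, if $T_f\neq 0$ then $(T\circ -X)_f=T_f$, and if $T_f=0$ then $T_f=0\in\{0,(T\circ -X)_f\}$. Maximality of $T$ now forces $T=T\circ -X$. This equality says that on every coordinate where $T$ vanishes, $-X$ (hence $X$) also vanishes, i.e.\ $\underline{X}\subseteq\underline{T}$.

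Applying this with $X=T^2$ and the tope $T^1$ gives $\underline{T^2}\subseteq\underline{T^1}$, and swapping the roles of $T^1$ and $T^2$ yields the reverse inclusion, which concludes the argument. The only place where one must be careful is the first step, connecting the combinatorial notion of tope (no element covers it) with the order-theoretic notion of maximality; everything else is a direct unwinding of (FS) and of the definition of $\preceq$.
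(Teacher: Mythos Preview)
Your proof is correct and follows essentially the same idea as the paper: use (FS) to obtain $T^1 \circ -T^2 \in \mathcal{L}$, observe $T^1 \preceq T^1 \circ -T^2$, and invoke maximality of the tope $T^1$. The paper states this as a two-line contradiction argument, while you make the equivalence ``tope $\Leftrightarrow$ maximal'' explicit and prove the slightly stronger intermediate fact $\underline{X}\subseteq\underline{T}$ for every covector $X$ and tope $T$; one small suggestion is that in your first step it is cleaner to pick a $\preceq$-minimal strict upper bound of $T$ directly rather than one of minimal rank.
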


\begin{proof}
Suppose that $\underline{T^1} \neq \underline{T^2}$. Then, $T^1 \circ -T^2 \in \mathcal{L}$ and $T^1 \prec T^1 \circ -T^2$. This implies that $T^1$ is not a tope, which is absurd.
\end{proof}

\noindent We can now state our generalization.

\begin{theorem} \label{ThMandel}
Let $(E,\,\mathcal{L})$ be a conditional oriented matroid. Its set of topes $\mathcal{T}$ determines $\mathcal{L}$ via $$\mathcal{L} = \big\{X \in \{-1,\,0,\,1\}^E\ \big|\ \forall T \in \mathcal{T},\, X \circ -T \in \mathcal{T}\big\}.$$
\end{theorem}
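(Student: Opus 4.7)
The plan is to establish the two set inclusions separately.

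Paragraph 1 (easy direction). The inclusion from $\mathcal{L}$ into the right-hand side is the easier half. I would first record the auxiliary fact that every covector has support contained in the common tope support: for $X \in \mathcal{L}$ and any tope $T$, (FS) provides $T \circ -X \in \mathcal{L}$, and a direct coordinate check shows $T \preceq T \circ -X$; if $\underline{X} \not\subseteq \underline{T}$ the inequality becomes strict, contradicting the maximality of $T$. Given this, for $X \in \mathcal{L}$ and $T \in \mathcal{T}$, (FS) yields $X \circ -T \in \mathcal{L}$, and its support $\underline{X} \cup \underline{T}$ equals $\underline{T}$, which is maximal among covector supports. A covector with maximal support cannot be strictly covered, so $X \circ -T$ is a tope.

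Paragraph 2 (hard direction, setup). For the reverse inclusion, assume $X \in \{-1,0,1\}^E$ satisfies $X \circ -T \in \mathcal{T}$ for every $T \in \mathcal{T}$. The same support comparison first forces $\underline{X} \subseteq \underline{T}$. The crucial step is then to apply the hypothesis to the tope $X \circ -T$ itself, yielding $X \circ -(X \circ -T) \in \mathcal{T}$; a short coordinate-by-coordinate computation identifies this sign vector with $X \circ T$. Consequently both $X \circ T$ and $X \circ -T$ lie in $\mathcal{L}$; they agree with $X$ on $\underline{X}$, both vanish outside $\underline{T}$, and carry the opposite signs $\pm T_e$ on $\underline{T} \setminus \underline{X}$, which is therefore their separation set.

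Paragraph 3 (hard direction, conclusion). To extract $X$ itself, I would apply (SE) to the pair $X \circ T$ and $X \circ -T$. A single application at an element $e \in \underline{T} \setminus \underline{X}$ produces $Z \in \mathcal{L}$ with $Z_e = 0$ and $Z_f = X_f$ for every $f$ outside the separation set. I would then iterate, by induction on $|\underline{T} \setminus \underline{X}|$, to zero out every coordinate in the separation set while preserving the values of $X$ off it. The main obstacle is precisely the bookkeeping of the iteration: re-applying (SE) naively can reintroduce a nonzero value at a coordinate already set to zero, so one has to select the partner covectors carefully at each step. A clean resolution is to invoke a generalized strong elimination property derivable from (SE) in the COM setting, which eliminates the full separation set in one step; alternatively the induction can be realized by passing to the contraction along each successively eliminated coordinate and applying the theorem in a smaller COM. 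Once such a covector is produced, it must agree with $X$ in every coordinate, so $X \in \mathcal{L}$.
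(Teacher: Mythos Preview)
Your Paragraphs~1 and~2 are correct. The identity $X\circ -(X\circ -T)=X\circ T$ is a nice observation (the paper does not use it), and it does place both $X\circ T$ and $X\circ -T$ in $\mathcal T$ for every tope $T$.

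The gap is in Paragraph~3. Your resolution~1 --- a ``generalized strong elimination'' that kills the whole separation set at once --- is simply false in COMs. Take the realizable COM of the affine arrangement $\{x=0\},\{y=0\},\{x+y=1\}$ in $\mathbb R^2$. Then $A=(+,+,+)$ and $B=(-,-,-)$ are both covectors (indeed topes) with $\mathrm S(A,B)=E$, yet their coordinatewise meet $(0,0,0)$ is \emph{not} a covector, since the three lines have no common point. So from the single pair $A=X\circ T$, $B=X\circ -T$ alone one cannot manufacture $X$; any successful argument must use the hypothesis for \emph{other} topes as well, which your (SE) iteration on that fixed pair never does.

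Your resolution~2 --- contract an $e\in\underline T\setminus\underline X$ and induct --- is the right idea, and it is exactly the paper's route, but you have not supplied the step that makes it run: one must check that $X\setminus\{e\}$ again satisfies the tope condition in $\mathcal L/\{e\}$, i.e.\ that $X\circ -Y\in\mathcal L$ for every $Y\in\mathcal L$ with $Y_e=0$ that restricts to a tope of the contraction. Your $Z$ from (SE) does not give this. The paper's device is to pick, for such a $Y$, two topes $T^1,T^2$ covering $Y$ with $\mathrm S(T^1,T^2)=\{e\}$; then $X\circ -T^1,\,X\circ -T^2\in\mathcal T$ by hypothesis, their separation set is exactly $\{e\}$ (since $X_e=0$), and a \emph{single} application of (SE) returns precisely $Z=X\circ -Y\in\mathcal L$. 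In other words, the cure for your bookkeeping problem is not to iterate (SE) on the large separation set $\underline T\setminus\underline X$, but to choose \emph{adjacent} topes so that the separation set is a singleton from the start. With that in hand, the induction on the contraction (or, in rank~$1$, the deletion) goes through.
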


\begin{proof}
It is clear that $\mathcal{L} \subseteq \big\{X \in \{-1,\,0,\,1\}^E\ \big|\ \forall T \in \mathcal{T},\, X \circ T \in \mathcal{T}\big\}$ since, for every $X \in \mathcal{L}$ and all $T \in \mathcal{T}$, $X \circ -T \in \mathcal{L}$ and $(X \circ T)^0 = \varnothing$.
	
\bigskip
	
\noindent For the backward argument, we argue by induction on $\mathrm{rk}\,\mathcal{L}$ and $\#E$. If $\mathrm{rk}\,\mathcal{L} = 0$, $\mathcal{L}$ consists of a one-element set $\{T\} \subseteq \{-1,\,0,\,1\}^E$. Therefore, for an element $X \in \{-1,\,0,\,1\}^E$, the fact $X \circ -T = T$ implies $X = T$, hence $X \in \mathcal{L}$.
	
\smallskip
	
\noindent If $\mathrm{rk}\,\mathcal{L} = 1$ and $\#E = 1$, then $\mathcal{L} = \{-1,\,0,\,1\}$ and $\mathcal{T} = \{-1,\, 1\}$. So, we clearly have $X \in \mathcal{L}$ for all $X \in \{-1,\,0,\,1\}$.
	
\smallskip
	
\noindent Now, assume that $\mathrm{rk}\,\mathcal{L} = 1$ and $\#E > 1$. Take $X \in \{-1,\,0,\,1\}^E$ such that $X \circ -T \in \mathcal{T}$ for each tope $T \in \mathcal{T}$. Denote by $F$ the subset of $E$ such that $\underline{T} = F$ for every $T \in \mathcal{T}$. The case $\underline{X} = F$ is easily solved, since $X = X \circ -T \in \mathcal{T} \subseteq \mathcal{L}$. The case $\underline{X} \varsubsetneq F$ remains open. Pick an element $e \in X^0 \cap F$, and consider a tope $Y \setminus \{e\}$ of the deletion $(E \setminus \{e\},\, \mathcal{L} \setminus \{e\})$. We have $Y^0 = \{e\} \cap F$, and $Y$ is covered in $\mathcal{L}$ by two topes $T^1, T^2 \in \mathcal{T}$ such that $\mathrm{S}(T^1,\,T^2) = \{e\}$. There exists $Z \in \mathcal{L}$ such that
$$Z_e = 0 \quad \text{and} \quad \forall f \in E \setminus \{e\},\ Z_f = \big((X \circ -T^1) \circ (X \circ -T^2)\big)_f = (X \circ -Y)_f.$$
The only possibility is $Z = X \circ -Y$, which means that $X \circ -Y \in \mathcal{L}$. Hence, for all topes $Y \setminus \{e\}$ in the deletion $(E \setminus \{e\},\, \mathcal{L} \setminus \{e\})$, we have $X \setminus \{e\} \circ -(Y \setminus \{e\}) \in \mathcal{L} \setminus \{e\}$. By induction, we get $X \setminus \{e\} \in \mathcal{L} \setminus \{e\}$, and consequently $X \in \mathcal{L}$.
	
\smallskip
	
\noindent Finally, assume that $\mathrm{rk}\,\mathcal{L} > 1$. Take $X \in \{-1,\,0,\,1\}^E$ such that $X \circ -T \in \mathcal{T}$ for each tope $T \in \mathcal{T}$. The case $\underline{X} = F$ is easily solved like before. The case $\underline{X} \varsubsetneq F$ remains. Pick an element $e \in X^0 \cap F$, and consider a tope $Y \setminus \{e\}$ of the contraction $(E \setminus \{e\},\, \mathcal{L}/\{e\})$. We have $Y^0 = \{e\} \cap F$, and $Y$ is covered in $\mathcal{L}$ by two topes $T^1, T^2 \in \mathcal{T}$ such that $\mathrm{S}(T^1,\,T^2) = \{e\}$. There exists $Z \in \mathcal{L}$ such that
$$Z_e = 0 \quad \text{and} \quad \forall f \in E \setminus \{e\},\ Z_f = \big((X \circ -T^1) \circ (X \circ -T^2)\big)_f = (X \circ -Y)_f.$$
The only possibility is $Z = X \circ -Y$, which means that $X \circ -Y \in \mathcal{L}$. Hence, for all topes $Y \setminus \{e\}$ in the contraction $(E \setminus \{e\},\, \mathcal{L}/\{e\})$, we have $X \setminus \{e\} \circ -(Y \setminus \{e\}) \in \mathcal{L}/\{e\}$. Since $\mathrm{rk}\,\mathcal{L}/\{e\} = \mathrm{rk}\,\mathcal{L} - 1$, then $X \setminus \{e\} \in \mathcal{L}/\{e\}$ by induction, and consequently $X \in \mathcal{L}$.
\end{proof}

\section{Applications on Hyperplane Arrangements} \label{SeApp}

\noindent This section describes the structure of apartments of hyperplane arrangements in term of conditional oriented matroids. Then, it proposes an algorithm to convert the former to the latter. We give the $f$-polynomial computing as extension example of this algorithm.  

\smallskip

\noindent Let $a_1, \dots, a_n, b$ be $n+1$ real coefficients such that $(a_1,\,\dots,\,a_n) \neq (0,\,\dots,\,0)$. A \emph{hyperplane} of $\mathbb{R}^n$ is an affine subspace $H := \big\{(x_1,\, \dots,\, x_n) \in \mathbb{R}^n \ \big|\ a_1x_1 + \dots + a_nx_n = b\big\}$ denoted by $\{a_1x_1 + \dots + a_nx_n = b\}$. A \emph{hyperplane arrangement} $\mathscr{A}$ is a finite set of hyperplanes. Denote by $H^{-1}$ and $H^1$ both connected components $\{a_1x_1 + \dots + a_nx_n < b\}$ and  $\{a_1x_1 + \dots + a_nx_n > b\}$ of $\mathbb{R}^n$, respectively. Moreover, set $H^0 = H$. The \emph{sign map} of $H$ is the function
$$\sigma_H: \mathbb{R}^n \rightarrow \{-1,\,0,\,1\}, \quad v \mapsto \begin{cases}
	-1 & \text{if}\ v \in H^{-1}, \\
	0 & \text{if}\ v \in H^0, \\
	1 & \text{if}\ v \in H^1. 
\end{cases}$$
The sign map of $\mathscr{A}$ is the function $\sigma_{\mathscr{A}}: \mathbb{R}^n \rightarrow \{-1,\,0,\,1\}^{\mathscr{A}}, \ v \mapsto \big(\sigma_H(v)\big)_{H \in \mathscr{A}}$. And the \emph{sign set} of $\mathscr{A}$ is the set $\sigma_{\mathscr{A}}(\mathbb{R}^n) := \big\{\sigma_{\mathscr{A}}(v)\ \big|\ v \in \mathbb{R}^n\big\}$. A \emph{face} of $\mathscr{A}$ is a subset $F$ of $\mathbb{R}^n$ such that
$$\exists x \in \sigma_{\mathscr{A}}(\mathbb{R}^n),\ F = \big\{v \in \mathbb{R}^n\ \big|\ \sigma_{\mathscr{A}}(v) = x\big\}.$$
A \emph{chamber} of $\mathscr{A}$ is a face $F$ such that $\sigma_{\mathscr{A}}(F) \in \{-1,\,1\}^{\mathscr{A}}$. Denote by $F(\mathscr{A})$ and $C(\mathscr{A})$ the sets composed by the faces and the chambers of $\mathscr{A}$, respectively. An \emph{apartment} of $\mathscr{A}$ is a chamber of a hyperplane arrangement contained in $\mathscr{A}$. Denote by $K(\mathscr{A})$ the apartment set of $\mathscr{A}$. The sets of faces and chambers in an apartment $K \in K(\mathscr{A})$ are, respectively, $$F(\mathscr{A},\,K) := \big\{F \in F(\mathscr{A})\ |\ F \subseteq K\big\} \quad \text{and} \quad C(\mathscr{A},\,K) := C(\mathscr{A}) \cap F(\mathscr{A},\,K).$$ 

\noindent Let $K \in K(\mathscr{A})$, and $\mathscr{B} = \{H \in \mathscr{A}\ |\ H \cap K = \varnothing\}$. The sign system $$\Big(\mathscr{A} \setminus \mathscr{B},\, \sigma_{\mathscr{A}}\big(F(\mathscr{A},\,K)\big) \setminus \mathscr{B}\Big)$$ is a conditional oriented matroid. \cite{bandelt2018coms} called it realizable COMs, and presented it as motivating example for conditional oriented matroids. 

\smallskip

\noindent We now present algorithms to do computations on apartments of hyperplane arrangements. The use of mathematics software system containing the following functions is assumed:
\begin{itemize}
\item \textbf{length} gives the length of a tuple,
\item \textbf{RandomElement} returns randomly an element from a set,
\item \textbf{poset} transforms a set, on which a partial order can be defined, to a poset,
\item and \textbf{rank} computes the rank of a poset or that of its elements. 
\end{itemize}

\begin{algorithm}(Generating Conditional Oriented Matroid from Topes):
	\begin{itemize}
		\item \textit{Input:} A tope set $\mathcal{T}$.
		\item \textit{Output:} A covector set $\mathcal{L}$.
		\item \textit{Remark:} It is an algorithmic version of Theorem~\ref{ThMandel}.
	\end{itemize}
	\textbf{function} GeneratingCOM(T) \\
	\hspace*{15pt} L $\leftarrow$ $\{\}$ \\
	\hspace*{15pt} l $\leftarrow$ \textbf{length}(\textbf{RandomElement}(T)) \\
	\hspace*{15pt} \textbf{for} X \textbf{in} $\{-1,\,0,\,1\}^l$ \\
	\hspace*{30pt} a $\leftarrow$ true \\
	\hspace*{30pt} \textbf{for} Y \textbf{in} T \\
	\hspace*{45pt} a $\leftarrow$ a \text{and} (X $\circ$ -Y \textbf{in} T) \\
	\hspace*{30pt} \textbf{if} a = true \\
	\hspace*{45pt} L $\leftarrow$ L $\sqcup$ $\{$X$\}$ \\
	\hspace*{15pt} \textbf{return} L
\end{algorithm}

\noindent We generate tope set by determining $\sigma_{\mathscr{A}}(v)$ for a random point of each chamber. Afterwards, we apply the previous algorithm to get the aimed conditional oriented matroid.

\begin{algorithm}(Transforming Apartment to Conditional Oriented Matroid):
	\begin{itemize}
		\item \textit{Input:} An affine function set $\mathscr{A}$ and a point set $P$.
		\item \textit{Output:} A covector set $\mathcal{L}$.
		\item \textit{Remark:} Each function in $\mathscr{A}$ corresponds to a hyperplane of the arrangement, and each point in $P$ is included in a chamber of the arrangement. 
	\end{itemize}
	\noindent \textbf{function} ApartmentToCOM(A, P) \\
	\hspace*{15pt} \textbf{function} ApartmentToTope(A, P) \\
	\hspace*{30pt} \textbf{function} covector(A, p) \\
	\hspace*{45pt} \textbf{function} sign(h, p) \\
	\hspace*{60pt} \textbf{if} h(p) $<$ 0 \\
	\hspace*{75pt} \textbf{return} -1 \\
	\hspace*{60pt} \textbf{else} \\
	\hspace*{75pt} \textbf{return} 1 \\
	\hspace*{45pt} \textbf{return} \textbf{tuple}(\textbf{sign}(h, p) \textbf{for} h \textbf{in} A) \\
	\hspace*{30pt} \textbf{return} \textbf{set}(\textbf{covector}(A, p) \textbf{for} p \textbf{in} P) \\
	\hspace*{15pt} \textbf{return} \textbf{GeneratingCOM}(ApartmentToTope(A, P))
\end{algorithm}

\noindent Consider an apartment $K \in K(\mathscr{A})$ in $\mathbb{R}^n$. Let $f_i(K)$ be the number of $i$-dimensional faces in $F(\mathscr{A},\,K)$, and $x$ a variable. The \emph{$f$-polynomial} of $K$ is $$f_K(x) := \sum_{i=0}^n f_i(K)\, x^{n-i}.$$

\begin{algorithm}($f$-Polynomial of Apartment):
\begin{itemize}
\item \textit{Input:} An affine function set $\mathscr{A}$ and a point set $P$.
\item \textit{Output:} A $f$-polynomial $f_K(x)$.
\item \textit{Remark:} An apartment is still represented by a pair $(\mathscr{A},\,E)$.
\end{itemize}
\textbf{function} fPolynomial(A, P) \\
\hspace*{15pt} $x$ \textbf{variable} \\
\hspace*{15pt} f $\leftarrow$ 0 \\
\hspace*{15pt} COM $\leftarrow$ \textbf{poset}(\textbf{ApartmentToCOM}(A, P)) \\
\hspace*{15pt} \textbf{for} X \textbf{in} COM \\
\hspace*{30pt} f $\leftarrow$ f + $x^{\mathbf{rank}(COM) - \mathbf{rank}(X)}$ \\
\hspace*{15pt} \textbf{return} f
\end{algorithm}

\noindent Other computer calculations of functions associated to apartments of hyperplane arrangements, like their Varchenko determinant \citep[Th.~1.3]{randriamaro2020varchenko}, can also be implemented by means of their conversion to conditional oriented matroids.

\begin{example}
Consider the apartment on Figure~\ref{Pic} with arrangement composed by the hyperplanes $\{x_2 = 0\}$, $\{x_1 - x_2 = 0\}$, $\{x_1 + x_2 = 1\}$, $\{x_2 = 3\}$, and $\{x_2 = -2\}$. To be able to compute the corresponding conditional oriented arrangement, and its $f$-polynomial, we take the nine points $(0,\,4)$,  $(0,\,1.5)$, $(0,\,0.5)$, $(0.5,\,0.2)$, $(1,\,0.2)$, $(-1,\,-0.2)$, $(0,\,-0.5)$, $(1.5,\,-0.2)$, and $(0,\,-3)$ into account. A computation with the mathematics software system \href{https://www.sagemath.org}{SageMath} gives us the following result.

\begin{figure}[h]
\centering
\includegraphics[scale=1]{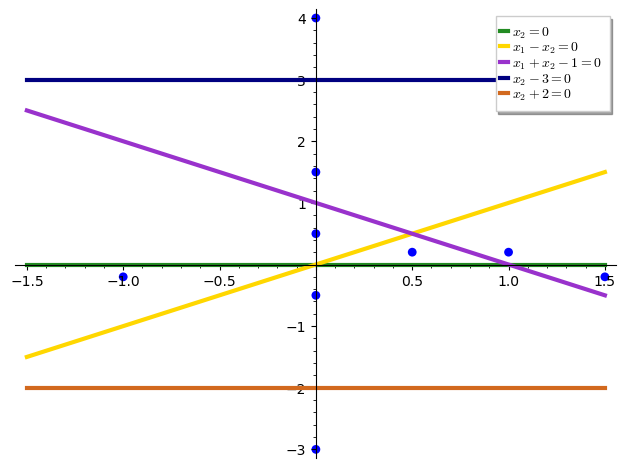}
\caption{An Apartment of a Hyperplane Arrangement}
\label{Pic}
\end{figure}

\begin{verbatim}
sage: ApartmentToCOM(A, P)
{(1, -1, -1, -1, 1), (1, 0, -1, -1, 1), (1, 1, 0, -1, 1), (0, 1, 1, -1, 1), 
(-1, 1, 0, -1, 1), (1, 0, 0, -1, 1), (1, -1, 1, 0, 1), (1, 1, 1, -1, 1),
(1, -1, 0, -1, 1), (0, 1, -1, -1, 1), (1, -1, 1, -1, 1), (1, 0, 1, -1, 1),
(-1, 1, 1, -1, 1), (-1, 1, -1, -1, 0), (0, -1, -1, -1, 1), (0, 0, -1, -1, 1),
(0, 1, 0, -1, 1), (1, -1, 1, 1, 1), (1, 1, -1, -1, 1), (-1, 0, -1, -1, 1),
(-1, 1, -1, -1, -1), (-1, -1, -1, -1, 1), (-1, 1, -1, -1, 1)}
sage: fPolynomial(A, P)
3*x^2 + 11*x + 9
\end{verbatim}
\end{example}

\bibliographystyle{agsm}
\bibliography{References}

\end{document}